\documentclass[12pt]{amsart}
\usepackage{amsmath}
\usepackage{amssymb}
\usepackage{amsfonts}
\usepackage{amsthm}
\usepackage{mathrsfs}
\usepackage[all]{xy}

\newcommand{\be}{\begin{equation}}
\newcommand{\ee}{\end{equation}}
\newtheorem{theorem}{Theorem}[section]
\newtheorem{lemma}[theorem]{Lemma}

\newtheorem{proposition}[theorem]{Proposition}

\theoremstyle{definition}
\newtheorem{definition}[theorem]{Definition}

\theoremstyle{remark}

\numberwithin{equation}{section}

\setlength{\oddsidemargin}{0.25in}

\setlength{\textwidth}{6in}

\setlength{\topmargin}{-0.25in}

\setlength{\textheight}{8in}

\begin{document}

\title{Principal realization of twisted Yangian $Y(\mathfrak{g}_{N})$}
\author{Naihuan Jing, Ming Liu$^*$}
\address{NJ \& ML: School of Science, South China University of Technology,
Guangzhou 510640, China}
\address{NJ: Department of Mathematics, North Carolina State University, Raleigh, NC 27695, USA}

\thanks{{\scriptsize
\hskip -0.4 true cm MSC (2010): Primary: 17B37; Secondary: 17B65.
\newline Keywords: Yangians, principal realization.\\
$*$Corresponding author.
}}

\maketitle

\begin{abstract}  We give the principal realization of the twisted Yangians
of orthogonal and symplectic types. The new bases are interpreted in terms of
discrete Fourier transform over the cyclic group $\mathbb Z_N$.
\end{abstract}

\section{Introduction}

Let $\mathfrak g$ be a simple complex Lie algebra.
As one of the two important classes of quantum groups
associated to $\mathfrak g$, the Yangian $Y(\mathfrak g)$ was introduced by Drinfeld
in the study of the Yang-Baxter equation \cite{D1, D2, D3, CP}. Other versions of
Yangians were given by Olshanski \cite{O1} in connection with classical groups.
All these Yangian algebras play important
roles in conformal field theory, combinatorics and representation theory
(see \cite{MNO} for a beautiful
survey and the monograph \cite{M} for recent developments).
For other presentations of Yangian algebras, see also \cite{BK}.

The Yangian $Y(\mathfrak{
gl}_N)$ of the general liner algebra captures and unifies, on a higher theoretic ground, many far reaching
aspects of invariant theory and combinatorial theory \cite{M2, GM}.
Motivated by the principal realization of the affine Kac-Moody Lie algebras, the authors
in \cite{BGJ} introduced the corresponding set of generators for the Yangian $Y(\mathfrak{
gl}_N)$ and show that the new basis is useful in studying representations of Yangians.
Roughly speaking, the idea is based on replacing the Cartan-Weyl basis by the
the Toeplitz basis in the general linear Lie algebra $\frak{gl}_N$, this enables one to
get new presentations of the Yangian.

We will study the principal generators for the Olshanski Yangian
algebras $Y(\mathfrak{so}_N)$ and
$Y(\mathfrak{sp}_N)$ in this letter and give their main properties. A new feature is that
all principal generators are actually discrete Fourier
transform of certain sequences defined by the Yangians
over the cyclic group $\mathbb Z_{N}$. It seems that the twisted cases
are also related to the Fourier transform over the abelian group $\mathbb Z_N\times \mathbb Z_N$.
Under discrete Fourier transform, several interesting properties are formulated
in the same pattern for the principal generators in the cases of the orthogonal and symplectic Yangians.

The paper is organized as follows. First in section two we recall the principal generators
for type $A$ and formulate the principal generators as Fourier transform. Several new results are
proved for later usage.
Section three discusses Olshanski twisted Yangian algebras. Section four
gives the principal realizations for the twisted Yangian algebras
of orthogonal and symplectic types.

\section{Principal realization of Yangian algebra $Y(\mathfrak{gl}_N)$}

\vskip 0.2in
   In this section, we first recall the principal realization for $Y(\mathfrak{gl}_N)$
   and then derive new relations using discrete Fourier transform for our later purpose.

\subsection{Principal basis of $\mathfrak{gl}_N$}

\vskip .1in

Let $\mathfrak{g}=\mathfrak{gl}_N$ be the Lie algebra of $N\times N$ complex matrices. The standard
Cartan-Weyl basis consists of matrices $E_{ij}$, where $i,j\in \mathbb{Z}_N=\{0,1...N-1\}$. For our
purpose we will make full use of the additive structure of the index set $\mathbb{Z}_N$.

The cyclic element in $\mathfrak{gl}_N$ is $$E=\sum_{i\in\mathbb{Z}_N}E_{i,i+1}$$
and its centralizer $C(E)=\bigoplus_{k\in\mathbb{Z}_N}\mathbb{C}E^k$
is a Cartan subalgebra of $\mathfrak{gl}_N$ called the principal Cartan subalgebra.
With respect to this Cartan subalgebra, the principal root space
decomposition is given as follows:

\begin{equation}\label{fomula2.1}
\mathfrak{gl}_N=\bigoplus_{i,j\in \mathbb{Z}_N} A_{ij},
A_{ij}=\sum_{k\in \mathbb{Z}_N}\omega^{ki}E_{k,k+j},
\end{equation}
where $\omega=e^{\frac{\mathfrak{i}2\pi}{N}}$.

Let $G$ be a finite abelian group with irreducible characters $\chi_i$, $(i=1, \ldots, |G|)$, the discrete
Fourier transform of the function $f(g)$ on $G$ is another function on $G^*=\{\chi_i|i=1, \ldots, |G|\}$ defined by
\be
\mathfrak{F}(f)(\chi)=\sum_{h\in G}\chi(h)f(h).
\ee

In the case of cyclic group $G=\mathbb Z_N$, $\mathbb Z_N^*=\{\chi^i|i=0, \ldots, N-1\}\simeq Z_N$,
and $\chi_i(j)=\omega^{ij}$. Therefore the discrete Fourier transform of the function $f$ on $\mathbb Z_N$ is
\be
\mathfrak{F}(f)(\chi_i)=\sum_{j=0}^{N-1}\omega^{ij}f(j).
\ee

The inverse Fourier transform is given by
\be
\mathfrak{F^{-1}}(g)(i)=\frac1N\sum_{j=0}^{N-1}\omega^{-ij}g(j).
\ee

Fix $j$, denote the finite sequence $\{E_{k,k+j}\}$ by $\{\epsilon_j\}$,
where $k$ runs over $\mathbb{Z}_N$, i.e., $\epsilon_j(k)=E_{k,k+j}$. Then the principal basis elements $A_{ij}$ are actually the $i$th term of the discrete Fourier transform of the sequence $\{\epsilon_j\}$, and
formula (\ref{fomula2.1}) takes the following new form:
$$A_{ij}=\mathfrak{F}(\{\epsilon_j(k)\})(i).$$
\vskip .2in
The algebraic structure of principal basis $A_{ij}$ is given by:

$$A_{ij}A_{kl}=\omega^{jk}A_{i+k,j+l}.$$

 Under the standard inner product $(x|y)=tr(xy)$, we have

 $$(A_{ij}|A_{kl})=tr(A_{ij}A_{kl})=N\omega^{-ij}\delta_{i,-k}\delta_{j,-l}, $$
and the dual principal basis is $\{\frac{\omega^{ij}}{N}A_{-i,-j}\}$.

 \subsection{Yangian $Y(\mathfrak{{gl}}_N)$}

 \begin{definition}

 The Yangian algebra $Y(\mathfrak{gl}_N)$ is an unital associative algebra
 with generators $t_{ij}^{(r)}$ ($i,j\in \{1,2...N\},r\in \mathbb{Z}_{+}$)
 subject to the relations:
\begin{align}\label{definingrelation1}
[t^{(r+1)}_{ij},t^{(s)}_{kl}]-[t^{(r)}_{ij},t^{(s+1)}_{kl}]
=t^{(r)}_{kj}t^{(s)}_{il}-t^{(s)}_{kj}t^{(r)}_{il},
\end{align}
where $t_{ij}^{(0)}=\delta_{ij}$.
\end{definition}
Its matrix presentation is given in terms of the rational Yang-Baxter $R$-matrix.
 Let $u$ be a formal variable and let
 \begin{align}\label{R-matrix}
 R(u)=1-\frac{P}{u}\in End(\mathbb{C}^{N})\otimes End( \mathbb{C}^{N})[[u^{-1}]],
 \end{align}
where $P$ is the permutation matrix: $P(u\otimes v)=v\otimes u$ for any $u, v\in \mathbb{C}^N$.
The matrix $R(u)$ satisfies the quantum Yang-Baxter equation:
 $$R_{12}(u)R_{13}(u+v)R_{23}(v)=R_{23}(v)R_{13}(u+v)R_{12}(u).$$

Set $$T(u)=\sum_{i,j}t_{ij}(u)\otimes E_{ij}\in
Y(\mathfrak{gl}_N)[[u^{-1}]]\otimes End(\mathbb{C}^{N})$$ where
 $$t_{ij}(u)=\delta_{ij}+\sum_{k=1}^{\infty}t_{ij}^{(k)}u^{-k}\in Y(\mathfrak{gl}_N)[[u^{-1}]].$$
The following well-known result (see \cite{M}) gives the FRT formulation \cite{FRT}
of the Yangian algebra.

 \begin{proposition} The defining relations of Yangian can be written compactly
 as
 $$R(u-v)T_{1}(u)T_{2}(v)=T_{2}(v)T_{1}(u)R(u-v).$$
\end{proposition}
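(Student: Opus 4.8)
The plan is to show that the single matrix relation $R(u-v)T_1(u)T_2(v) = T_2(v)T_1(u)R(u-v)$, when expanded in the entries $E_{ij}$ of $\mathrm{End}(\mathbb{C}^N)^{\otimes 2}$ and in powers of $u^{-1}$ and $v^{-1}$, is equivalent to the family of commutation relations \eqref{definingrelation1}. First I would write $P = \sum_{i,j} E_{ij}\otimes E_{ji}$ so that $R(u-v) = 1 - \tfrac{1}{u-v}\sum_{i,j}E_{ij}\otimes E_{ji}$, and note that in the slot $E_{ij}\otimes E_{kl}$ the operator $T_1(u)T_2(v)$ contributes $t_{ij}(u)t_{kl}(v)$ while $T_2(v)T_1(u)$ contributes $t_{kl}(v)t_{ij}(u)$. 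Multiplying out the two sides and comparing the coefficient of $E_{ij}\otimes E_{kl}$ gives
\begin{align}\label{Rmatexpand}
(u-v)\bigl(t_{kl}(v)t_{ij}(u) - t_{ij}(u)t_{kl}(v)\bigr) = t_{kj}(u)t_{il}(v) - t_{kj}(v)t_{il}(u),
\end{align}
after the standard bookkeeping: the $P$ on the left of $T_1T_2$ produces $\sum_a t_{aj}(u)t_{il}(v)$ contracted appropriately, and likewise on the right, and the index sums collapse because $P$ swaps tensor factors. (I would present this contraction carefully, since getting the indices of the $P$-terms right is the one genuinely error-prone spot.)

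Next I would extract the coefficient of $u^{-r-1}v^{-s}$ from \eqref{Rmatexpand}. The factor $(u-v)$ turns the left side into the difference of the $u^{-r}v^{-s}$-coefficient and the $u^{-r-1}v^{-s+1}$-coefficient of $t_{kl}(v)t_{ij}(u) - t_{ij}(u)t_{kl}(v)$; recalling $t_{ij}(u) = \delta_{ij} + \sum_{r\ge 1} t_{ij}^{(r)}u^{-r}$, this yields exactly $[t_{ij}^{(r+1)}, t_{kl}^{(s)}] - [t_{ij}^{(r)}, t_{kl}^{(s+1)}]$ (with the convention $t_{ij}^{(0)} = \delta_{ij}$ absorbing the boundary terms consistently). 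On the right side, extracting the $u^{-r-1}v^{-s}$-coefficient of $t_{kj}(u)t_{il}(v) - t_{kj}(v)t_{il}(u)$ gives $t_{kj}^{(r)}t_{il}^{(s)} - t_{kj}^{(s)}t_{il}^{(r)}$, again using $t^{(0)}=\delta$. Matching the two recovers \eqref{definingrelation1} precisely, so the matrix relation implies the defining relations.

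For the converse, I would run the same expansion in reverse: assuming \eqref{definingrelation1} holds for all $r,s\ge 0$, assemble the generating series to obtain \eqref{Rmatexpand}, then repackage \eqref{Rmatexpand} over all index quadruples $(i,j,k,l)$ into the tensor identity by the same $P$-contraction, checking that the $r=0$ or $s=0$ boundary cases are handled by the $t_{ij}^{(0)}=\delta_{ij}$ convention so no extra relations are needed. I expect the main obstacle to be purely organizational rather than conceptual: keeping the four matrix indices and the two spectral-parameter powers aligned through the $P$-contraction, and verifying that the $(u-v)$ prefactor together with the $\delta_{ij}$ initial terms reproduces the shifted-commutator left-hand side of \eqref{definingrelation1} without sign or index slips. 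This is a classical computation (the FRT presentation, see \cite{FRT, M}), so I would state the correspondence cleanly and carry out the index contraction once in detail, leaving the symmetric reverse direction to the reader.
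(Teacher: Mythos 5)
Your overall strategy is the right one and is exactly the standard argument behind the FRT presentation that the paper invokes by citation (the paper itself offers no proof of this proposition, deferring to \cite{M, FRT}): expand $R(u-v)=1-\frac{P}{u-v}$, contract the $P$-terms against $T_1(u)T_2(v)$ and $T_2(v)T_1(u)$, compare coefficients of $E_{ij}\otimes E_{kl}$, and then compare coefficients in $u^{-1},v^{-1}$. However, the one spot you flagged as error-prone is where you slipped. Carrying out the contraction with the paper's conventions, the coefficient of $E_{ij}\otimes E_{kl}$ in $PT_1(u)T_2(v)$ is $t_{kj}(u)t_{il}(v)$ and in $T_2(v)T_1(u)P$ is $t_{kj}(v)t_{il}(u)$, so the scalar identity is
\begin{equation*}
(u-v)\bigl(t_{ij}(u)t_{kl}(v)-t_{kl}(v)t_{ij}(u)\bigr)=t_{kj}(u)t_{il}(v)-t_{kj}(v)t_{il}(u),
\end{equation*}
i.e.\ the left side is $+(u-v)[t_{ij}(u),t_{kl}(v)]$, whereas your displayed equation has the commutator reversed; as written it is inconsistent with the coefficient $[t_{ij}^{(r+1)},t_{kl}^{(s)}]-[t_{ij}^{(r)},t_{kl}^{(s+1)}]$ that you then extract. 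There is also an off-by-one in the power you extract: the defining relation \eqref{definingrelation1} for given $r,s$ is the coefficient of $u^{-r}v^{-s}$ (not $u^{-r-1}v^{-s}$) of the identity above, since multiplication by $(u-v)$ shifts $[t^{(r+1)},t^{(s)}]-[t^{(r)},t^{(s+1)}]$ into that slot while the right side contributes $t_{kj}^{(r)}t_{il}^{(s)}-t_{kj}^{(s)}t_{il}^{(r)}$ there directly; the case $r=s=0$ is vacuous because $t^{(0)}_{ij}=\delta_{ij}$ is central. With these two bookkeeping corrections the argument, including the reverse direction by reassembling the series, is complete and matches the standard proof.
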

\subsection{The principal realization of $Y(\mathfrak{gl}_N)$}
 In \cite{BGJ} a new set of generators, the principal generators, $x_{ij}^{(k)}$ are introduced. Here the indices $i,j\in \mathbb{Z}_{N}$, $k \in \mathbb{N}$. Let $x_{ij}(u)$ be the generating series:
 $$x_{ij}(u)=\sum_{n=0}^{\infty}x_{ij}^{(n)}u^{-n},$$
 where $x_{ij}^{(0)}=\delta_{i,0}\delta_{j,0}$. Then
 \begin{equation}\label{prin1}
 x_{ij}(u)=\sum_{k\in\mathbb Z_N}\frac{\omega^{-ki}}{N}t_{k,j+k}(u).
 \end{equation}
This can be viewed as the inverse of the Fourier transform on the finite sequence $\{t_{k,k+j}(u)\}$ where
the variable $k\in \mathbb{Z}_N$. Then the formula (\ref{prin1}) can be rewritten as
$$x_{ij}(u)=\mathfrak{F}^{-1}(\{t_{k,k+j}(u)\})(i)$$
 Rewriting the T-matrix $T(u)$ by using the principal basis
 of $\mathfrak{gl}_N$ and $x_{ij}(u)$ as follows
 $$T(u)=\sum_{k,l\in \mathbb{Z}_N}x_{kl}(u)\otimes A_{kl},$$
 we obtain the principal realization of $Y(\mathfrak{gl}_N)$ as follows.
 \vskip.2in

 \begin{proposition}\cite{BGJ}\label{BGJ}
 The principal generators $x_{ij}^{(k)}$ of Yangian $Y(\mathfrak{gl}_N)$ satisfy the following relations:
 \begin{align*}
 &[x_{ij}(u),x_{kl}(v)]=\frac{1}{u-v}(\sum_{a,b}\frac{\omega^{ib-bk-ab}}{N}x_{i-a,j-b}(u)x_{k+a,l+b}(v)\\
 &~~~~~~~~~~~~~~~~-\sum_{a,b}\frac{\omega^{ja-al-ab}}{N}x_{k+a,l+b}(v)x_{i-a,j-b}(u))
 \end{align*}
 \end{proposition}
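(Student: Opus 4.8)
The plan is to translate the known RTT relation $R(u-v)T_1(u)T_2(v) = T_2(v)T_1(u)R(u-v)$, or equivalently the bracket relation \eqref{definingrelation1} for the $t^{(r)}_{ij}$, into the principal generators via the defining change of variables \eqref{prin1}. Since \eqref{prin1} expresses $x_{ij}(u)$ as a linear combination of the $t_{k,k+j}(u)$ with coefficients $\omega^{-ki}/N$, the inverse is the Fourier transform $t_{k,k+j}(u) = \sum_{i\in\mathbb Z_N}\omega^{ki}x_{ij}(u)$; the whole computation is a substitution followed by collecting terms. First I would take the commutator version of the defining relations in generating-series form,
$$
(u-v)[t_{ij}(u),t_{kl}(v)] = t_{kj}(u)t_{il}(v) - t_{kj}(v)t_{il}(u),
$$
valid for $i,j,k,l\in\{1,\dots,N\}$ identified with $\mathbb Z_N$, which is the standard consequence of \eqref{definingrelation1} (and of Proposition on the FRT form). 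This is the identity I would feed the change of variables into.

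Next I would substitute. On the left, $[x_{ij}(u),x_{kl}(v)]$ becomes $\tfrac1{N^2}\sum_{p,q}\omega^{-pi-qk}[t_{p,j+p}(u),t_{q,l+q}(v)]$; applying the $t$-commutator identity with indices $(p,j+p)$ and $(q,l+q)$ turns the inner bracket into $\tfrac{1}{u-v}\big(t_{q,j+p}(u)t_{p,l+q}(v) - t_{q,j+p}(v)t_{p,l+q}(u)\big)$. Then I would re-express each $t_{a,b}$ back in terms of the $x$'s via the Fourier transform $t_{a,b}(u)=\sum_{m}\omega^{ma}x_{m,b-a}(u)$. This produces, for the first of the two terms, a quadruple sum over $p,q$ and two Fourier indices; the key step is to perform the sums over $p$ and $q$, each of which collapses a geometric sum $\sum_{p}\omega^{p(\cdots)}$ to $N$ times a Kronecker delta, thereby eliminating two of the four summation indices and leaving exactly the double sum over $a,b$ claimed in the statement, with the shifted indices $x_{i-a,j-b}(u)x_{k+a,l+b}(v)$ and the phase $\omega^{ib-bk-ab}/N$. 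The second term is handled identically and produces $-\omega^{ja-al-ab}/N\, x_{k+a,l+b}(v)x_{i-a,j-b}(u)$; the asymmetry between the two phases is exactly the asymmetry between the two terms $t_{kj}(u)t_{il}(v)$ and $t_{kj}(v)t_{il}(u)$ in the $t$-relation after the index shifts, so I would keep careful track of which factor carries $u$ and which carries $v$.

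The main obstacle is bookkeeping: matching the exponents of $\omega$ after the two delta-collapses so that they come out as $ib-bk-ab$ and $ja-al-ab$ rather than some equivalent-but-different-looking expression. Concretely, after collapsing one sum sets (say) the second Fourier index equal to something like $k+a$ and the remaining index relabels to $b$, and one must reconcile residual factors such as $\omega^{-qk}$, $\omega^{ma}$, and $\omega^{(\text{shift})}$ arising from writing $l+q$ versus $l$; the algebra of exponents mod $N$ is where an error is easiest to make, and I would verify it by checking the $N=1$ case (where everything should reduce to the $\mathfrak{gl}_1$ relation) and by checking that setting $i=j=k=l=0$ and summing recovers the original $t$-relation for $t_{k,k}(u)$-type combinations. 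Once the exponents are pinned down, writing $\tfrac1{u-v}$ out front and recognizing the result is purely formal. I would also note that the coefficient $1/N$ inside each sum is the leftover of the $1/N^2$ from two applications of \eqref{prin1} after one factor of $N$ is consumed by a delta; this sanity check on powers of $N$ is a good guard against miscounting the collapsed sums.
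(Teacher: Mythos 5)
Your derivation is essentially correct: starting from $(u-v)[t_{ij}(u),t_{kl}(v)]=t_{kj}(u)t_{il}(v)-t_{kj}(v)t_{il}(u)$, substituting \eqref{prin1} and its inverse $t_{a,b}(u)=\sum_m\omega^{ma}x_{m,b-a}(u)$, and reindexing does produce exactly the phases $\omega^{ib-bk-ab}$ and $\omega^{ja-al-ab}$ (I checked both terms). Be aware, though, that the paper does not prove this proposition at all: it is quoted from \cite{BGJ}, and the text only remarks that it follows from the compact reformulation in Theorem \ref{new}, which is itself stated without proof; the computation of this type that the paper actually carries out (Lemma \ref{lemma4.3}, for the twisted case) is done at the matrix level, expanding $T(u)=\sum_{k,l}x_{kl}(u)\otimes A_{kl}$ and $P=\frac1N\sum_{k,l}\omega^{kl}A_{kl}\otimes A_{-k,-l}$ and using the multiplication rule $A_{ij}A_{kl}=\omega^{jk}A_{i+k,j+l}$, so that no Kronecker-delta collapses are needed, only a relabeling of indices. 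Your component-wise route is an equivalent alternative, but one step is misdescribed: after the back-substitution the summation indices $p,q$ also sit inside the second subscripts of the $x$'s (only through the combination $q-p$), so the double sum over $p,q$ is \emph{not} two independent geometric sums; you must first change variables to $b=q-p$, after which only the remaining sum over $p$ collapses, giving $N\delta_{m+n,\,i+k}$. This consumes exactly one factor of $N$ out of $1/N^2$, which is what your final sanity check correctly predicts and which quietly contradicts your earlier claim that both the $p$- and $q$-sums collapse to deltas. With that single correction the argument goes through as written.
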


 We can simplify the commutation relations and get a new compact formula as follows:
\begin{theorem}\label{new}
 The principal generators $x_{ij}^{(k)}$ of Yangian $Y(\mathfrak{gl}_N)$ satisfy the following relations:
 \begin{align*}
 &(u-v)[x_{ij}(u),x_{kl}(v)]=\frac1N\sum_{a,b}\omega^{-ab}(x_{k+a,j+b}(u)x_{i-a,j-b}(v)-x_{k+a,j+b}(v)x_{i-a,j-b}(u)),
 \end{align*}
 where $a,b$ run through the group $\mathbb Z_N$.
\end{theorem}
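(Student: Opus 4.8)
The plan is to start from the relation in Proposition \ref{BGJ} and carry out the double summation over the auxiliary indices $a,b\in\mathbb Z_N$ in order to recognize the two sums on the right-hand side as essentially the same object after a change of variables. The key observation is that the factors $\omega^{ib-bk-ab}$ and $\omega^{ja-al-ab}$ look asymmetric, but both pieces on the right involve the product $x_{i-a,j-b}(\cdot)\,x_{k+a,l+b}(\cdot)$ (in the two possible orders), so it should be possible to absorb the discrepancy by shifting $a$ and $b$. First I would rename the summation variables in the second sum, say $a\mapsto -a'$, $b\mapsto -b'$, so that $x_{k+a,l+b}(v)x_{i-a,j-b}(u)$ becomes $x_{k-a',l-b'}(v)x_{i+a',j+b'}(u)$; then the roles of the "$+$" and "$-$" shifted indices are interchanged between the two products, which is exactly what is needed so that, after also relabeling in the first sum, both terms display the pattern $x_{k+a,j+b}x_{i-a,j-b}$ claimed in Theorem \ref{new}.

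The main technical point — and the step I expect to be the real obstacle — is reconciling the second index. In Proposition \ref{BGJ} the four series carry second indices $j-b$, $l+b$, while in the target formula they are $j+b$ and $j-b$; in particular the letter $l$ has disappeared and been replaced by $j$. This is not an accident of notation: it must come from a hidden constraint forcing $l$ to contribute only through $l-j$, or from the fact that the phase $\omega^{-ab}$ combined with the sum over $b$ produces a delta function $\delta_{l,j}$-type collapse (using $\tfrac1N\sum_b \omega^{b(\cdots)}=\delta_{(\cdots),0}$). So the crucial computation is to isolate, inside each of the two sums in Proposition \ref{BGJ}, the part of the exponent that is linear in the summation variable $b$ but independent of $a$, perform that $b$-sum first to get a Kronecker delta, and check that the surviving exponent reduces to $\omega^{-ab}$ with the second indices becoming $j\pm b$ as stated. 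I would do the analogous reduction on the $a$-sum to clear the remaining exponential factors $\omega^{ib-bk}$ and $\omega^{ja-al}$, which should also collapse or simplify against the delta constraints.

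Concretely, I would proceed in the following order. (1) Write both sums in Proposition \ref{BGJ} with all exponents expanded, separating the $a$-dependent, $b$-dependent, and cross terms. (2) In the first sum, substitute $b\to b$, $a\to a$ but regroup the exponent as $\omega^{-ab}\cdot\omega^{b(i-k)}$ and recognize $\tfrac1N\sum$ over the appropriate index as forcing the collapse; similarly for the second sum after the sign-flip substitution $a\to-a$, $b\to-b$ so that its operator product matches $x_{k+a,j+b}(v)x_{i-a,j-b}(u)$. (3) Combine the two collapsed expressions and read off the compact identity. The bookkeeping of signs in the exponents of $\omega$ and the careful tracking of which variable ($u$ or $v$) sits in which factor is where errors are most likely, so I would double-check by specializing to small $N$ (say $N=1$, where everything must reduce to the $\mathfrak{gl}_1$ Yangian relation, and $N=2$) before trusting the general manipulation. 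Once the right-hand side is in the symmetric form, multiplying through by $u-v$ gives exactly the statement of Theorem \ref{new}.
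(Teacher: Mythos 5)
Your overall strategy---obtaining Theorem \ref{new} from Proposition \ref{BGJ} by affine changes of the summation variables $(a,b)\in\mathbb Z_N\times\mathbb Z_N$---is sound, and your treatment of the first indices and of which spectral parameter sits where is on the right track. But the step you yourself single out as ``the real obstacle'' is resolved by a mechanism that does not exist. There is no Kronecker-delta collapse turning $l$ into $j$: in each sum the phase $\omega^{-ab}$ multiplies operator products whose \emph{indices} depend on $a$ and $b$, so $\sum_b\omega^{b(\cdots)}$ never factors out as a scalar geometric sum and produces no $\delta_{l,j}$. In fact no identity of the printed shape can hold, because the right-hand side of Theorem \ref{new} as stated contains no $l$ at all while the left-hand side certainly depends on $l$ (already at order $u^0v^{-1}$ the left side is $[x^{(1)}_{ij},x^{(1)}_{kl}]=\frac1N(\omega^{-jk}-\omega^{-il})x^{(1)}_{i+k,j+l}$, whereas the printed right side produces terms $x^{(1)}_{i+k,2j}$ and $x^{(1)}_{i+k,0}$). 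The statement carries a typo: the second factor should be $x_{i-a,l-b}$, not $x_{i-a,j-b}$, i.e.
\begin{equation*}
(u-v)[x_{ij}(u),x_{kl}(v)]=\frac1N\sum_{a,b}\omega^{-ab}\bigl(x_{k+a,j+b}(u)\,x_{i-a,l-b}(v)-x_{k+a,j+b}(v)\,x_{i-a,l-b}(u)\bigr).
\end{equation*}

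With that correction your plan closes with no ``collapse'' and no leftover phases: in the first sum of Proposition \ref{BGJ}, whose exponent is $\omega^{b(i-k-a)}$, substitute $a\mapsto i-k-a$ and then $b\mapsto -b$; the exponent becomes $\omega^{-ab}$ and the product becomes $x_{k+a,j+b}(u)x_{i-a,l-b}(v)$. In the second sum, whose exponent is $\omega^{a(j-l-b)}$, substitute $b\mapsto j-l-b$ and then $b\mapsto -b$, giving $\omega^{-ab}x_{k+a,j+b}(v)x_{i-a,l-b}(u)$. Note the two sums need \emph{different} substitutions (the first shifts and reflects $a$, the second shifts and reflects $b$), not the single sign flip $(a,b)\mapsto(-a,-b)$ of your step (2). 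Two further cautions: the $N=1$ check is vacuous (all indices are $0$, so $j=l$ automatically) and would not have exposed the $j$-versus-$l$ problem; and the paper gives no written proof of this theorem---it is asserted as a simplification of Proposition \ref{BGJ}, and the compact form also drops out of expanding $PT_1(u)T_2(v)-T_2(v)T_1(u)P$ in the principal basis as in Lemma \ref{lemma4.3}---so your direction of derivation is legitimate, being an invertible relabelling of the same identity.
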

It is easy to see that Proposition \ref{BGJ} is a consequence of Theorem \ref{new}.
 
\section{Twisted Yangian $Y(\mathfrak{so}_N)$ and $Y(\mathfrak{sp}_N)$}
\vskip.2in
We first describe the structure of the Oshanski twisted Yangian algebras associated to $\mathfrak{so}_N$ and $\mathfrak{sp}_N$ in this section.

\subsection{The Lie algebra $\mathfrak{so}_N$ and $\mathfrak{sp}_N$}

\vskip.2in
We will consider simultaneously both $\mathfrak{so}_N$ and $\mathfrak{sp}_N$.
In the following let the index set $\mathbb Z_N=\{0, \ldots, N-1\}$ for matrices in $Mat(N)$.
Let $A\mapsto A^{t}$ denote the transposition of $Mat(N)$ defined by
$$E^{t}_{ij}=\theta_{i}\theta_{j}E_{N-1-j,N-1-i},$$
where $i,j \in \mathbb{Z}_N$. The scalar $\theta_i$ is defined according to
two cases as follows. For the symmetric case,
\begin{align*}
\theta_{i}=1, ~i=0,1, \ldots, N-1,
\end{align*}
and for the alternating or antisymmetric case with $N=2n$,
\begin{align*}
\theta_{i}=\left\{
                     \begin{array}{ll}
                       -1, & \hbox{$i=0,1, \ldots,n-1$;} \\
                       1, & \hbox{$i=n,n+1, \ldots, 2n-1$.}
                     \end{array}
                   \right.
\end{align*}

Introduce the following elements of the Lie algebra $\mathfrak{gl}_N$:
\begin{align*}
F_{ij}=E_{ij}-E^{t}_{ij}=E_{ij}-\theta_{i}\theta_{j}E_{N-1-j,N-1-i},
\end{align*}
then the Lie subalgebra 
spanned by $F_{ij}$ is isomorphic to $\mathfrak{so}_N$ in the symmetric case and to $\mathfrak{sp}_N$ in the alternating case. The resulting Lie algebra will be denoted by $\mathfrak{g}_N$. Thus,
\begin{align*}
\mathfrak{g}_N=\mathfrak{so}_N  ~or ~ \mathfrak{sp}_N,
\end{align*}
where the latter case $N$ is supposed to be even.
Corresponding to the principal basis of $\mathfrak{gl}_N$, we can derive the following simple result.

\begin{proposition}\label{P:dualmatirx}
 The subalgebra of $\mathfrak{gl}_N$ spanned by the elements $B_{ij}=A_{ij}-A_{ij}^{t}$ $i,j\in \mathbb{Z}_N$
 is isomorphic to $\mathfrak{g}_N$, where \\
$$A^{t}_{ij}=\frac{\omega^{-i(1+j)}}{N}\sum_{k,l\in\mathbb{Z}_N}\theta_k\theta_{k+j} \omega^{-k(i+l)}A_{lj}.$$
In particular, $A_{ij}^{t}=\omega^{-i(1+j)}A_{-i,j}$ in the symmetric case.
\end{proposition}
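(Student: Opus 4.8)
The plan is to separate the statement into an (immediate) spanning remark and an explicit Fourier computation. The map $t\colon\mathfrak{gl}_N\to\mathfrak{gl}_N$, $X\mapsto X^{t}$, is linear, so $\mathrm{im}(1-t)$ is a subspace. Since $\{E_{ij}\}$ is a basis of $\mathfrak{gl}_N$, the elements $F_{ij}=(1-t)E_{ij}$ span $\mathrm{im}(1-t)$; by the very definition of $\mathfrak g_N$ this span is $\mathfrak g_N$. Since $\{A_{ij}\}$ is also a basis of $\mathfrak{gl}_N$, the elements $B_{ij}=(1-t)A_{ij}$ again span $\mathrm{im}(1-t)$. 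Hence the span of the $B_{ij}$ is exactly $\mathfrak g_N$ — so it is a Lie subalgebra and the asserted isomorphism is literally the identity — and the whole content of the proposition is the displayed formula for $A_{ij}^{t}$. (Incidentally this also explains why $t$ is an involution: $(E_{ij}^{t})^{t}$ comes out with coefficient $\theta_i\theta_{N-1-i}\,\theta_j\theta_{N-1-j}$, which equals $1$ in both cases by the identity used below.)

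For the formula I would apply $t$ termwise to $A_{ij}=\sum_{k\in\mathbb Z_N}\omega^{ki}E_{k,k+j}$, obtaining $A_{ij}^{t}=\sum_{k}\omega^{ki}\theta_k\theta_{k+j}\,E_{N-1-k-j,\,N-1-k}$. The matrix $E_{N-1-k-j,\,N-1-k}$ is of the form $E_{p,p+j}$ with $p=N-1-k-j$ (its column index exceeds its row index by $j$), and inverting the Fourier transform $A_{\cdot j}=\mathfrak F(\{\epsilon_j(k)\})$ gives $E_{p,p+j}=\frac1N\sum_l\omega^{-pl}A_{lj}$; using $-pl\equiv(1+k+j)l\pmod N$ this produces
\be
A_{ij}^{t}=\frac1N\sum_{k,l\in\mathbb Z_N}\theta_k\theta_{k+j}\,\omega^{\,ki+(1+k+j)l}\,A_{lj}.
\ee
I would then substitute $k\mapsto -k-1-j$ in the $k$-sum: a short computation turns the exponent $ki+(1+k+j)l$ into $-k(i+l)-i(1+j)$, and turns $\theta_k\theta_{k+j}$ into $\theta_{N-1-k}\,\theta_{N-1-k-j}$, which is again $\theta_k\theta_{k+j}$ because $\theta_{N-1-a}=\varepsilon\,\theta_a$ for a sign $\varepsilon$ independent of $a$ ($\varepsilon=1$ in the symmetric case, $\varepsilon=-1$ in the alternating case), so that $\theta_{N-1-a}\theta_{N-1-b}=\theta_a\theta_b$. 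This yields exactly $A_{ij}^{t}=\frac{\omega^{-i(1+j)}}{N}\sum_{k,l}\theta_k\theta_{k+j}\,\omega^{-k(i+l)}A_{lj}$.

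Finally, in the symmetric case $\theta_k\equiv1$, so $\sum_k\omega^{-k(i+l)}=N\delta_{l,-i}$ and the formula collapses to $A_{ij}^{t}=\omega^{-i(1+j)}A_{-i,j}$, which is the stated special case. The only step that needs genuine care is the last one — the $\omega$-exponent bookkeeping under $k\mapsto -k-1-j$ and the isolation of the single property $\theta_{N-1-a}\theta_{N-1-b}=\theta_a\theta_b$ that makes the orthogonal and symplectic cases go through by one uniform computation; the spanning argument establishing the isomorphism is routine.
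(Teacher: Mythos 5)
Your proof is correct and follows essentially the same route as the paper's: apply the transposition termwise to $A_{ij}=\sum_k\omega^{ki}E_{k,k+j}$, invert the Fourier transform to express $E_{p,p+j}$ in terms of the $A_{lj}$, and use $\theta_{N-1-a}\theta_{N-1-b}=\theta_a\theta_b$ to absorb the sign; the only difference is that you Fourier-invert before reindexing the $k$-sum while the paper reindexes first, which is a cosmetic reordering. Your opening remark that the $B_{ij}=(1-t)A_{ij}$ span $\mathrm{im}(1-t)=\mathrm{span}\{F_{kl}\}=\mathfrak g_N$ because $\{A_{ij}\}$ is a basis is a welcome addition — the paper's proof verifies only the formula and leaves that part of the statement implicit.
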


\begin{proof}
It follows from Equation (\ref{fomula2.1}) that
\begin{align*}
&A_{ij}^{t}= \sum_{k\in \mathbb{Z}_N}\omega^{ki}E^{t}_{k,k+j}\\
&~~~~~~~~~~=\sum_{k\in \mathbb{Z}_N}\omega^{ki}\theta_k\theta_{k+j}E_{N-1-k-j,N-1-k}\\
&~~~~~~~~~~=\sum_{k\in \mathbb{Z}_N}\omega^{(N-1-j-k)i}\theta_{N-1-k-j}\theta_{N-1-k}E_{k,k+j}\\
&~~~~~~~~~~=\omega^{-i(1+j)}\sum_{k\in \mathbb{Z}_N}\omega^{-ki}\theta_{k+j}\theta_{k}E_{k,k+j},
\end{align*}
where we used $\theta_{N-1-k}=-\theta_{k}$ and $\omega^N=1$ in the last equation.
It follows from equation (\ref{fomula2.1}) and inverting the Fourier transform that
\begin{align*}
E_{k,k+j}=\frac{1}{N}\sum_{l\in \mathbb{Z}_N}\omega^{-kl}A_{lj}.
\end{align*}
Therefore we have
\begin{align*}
A_{ij}^{t}= \frac{\omega^{-i(1+j)}}{N}\sum_{k,l\in \mathbb{Z}_N}\theta_{k+j}\theta_{k}\omega^{-k(i+l)}A_{lj}.
\end{align*}
In particular, $\theta_k=\theta_{k+j}=1$ in the symmetric case and
\begin{align*}
&\sum_{l\in \mathbb{Z}_N}\omega^{-kl}A_{lj}=\sum_{m,l\in \mathbb{Z}_N}\omega^{-kl}\omega^{ml}E_{m,m+j}\\
&~~~~~~~~~~~~~~~~~~~~~~~~~~~~~~~~~~~~~~~~~~=\sum_{m\in \mathbb{Z}_N}(\sum_{l\in \mathbb{Z}_N}\omega^{(-k+m)l})E_{m,m+j}\\
&~~~~~~~~~~~~~~~~~~~~~~~~~~~~~~~~~~~~~~~~~~=\sum_{m\in \mathbb{Z}_N}N\delta_{m,k}E_{m,m+j}=NE_{k,k+j}.
\end{align*}
So in the case $\mathfrak{g}_N=\mathfrak{so}_N$
$$A^{t}_{ij}=\omega^{-i(1+j)}A_{-i,j}.$$
\end{proof}

Just as in the case of $\mathfrak{gl}_N$, we can also interpret the generators $B_{ij}'s$ using Fourier transform.
$$B_{ij}=\mathfrak{F}(\{F_{k,k+j}\})(i).$$

\subsection{Twisted Yangians $Y(\mathfrak{g}_N)$}
\vskip.2in
\begin{definition}
The twisted Yangian corresponding to $\mathfrak{g}_N$ is a unital
associative algebra with generators $s_{ij}^{(1)},s_{ij}^{(2)},...$, where $i,j\in \mathbb{Z}_N$,
and the defining relations are given in terms of generating series
$$s_{ij}(u)=\delta_{ij}+s_{ij}^{(1)}u^{-1}+s_{ij}^{(2)}u^{-2}+...$$
as follows.
\begin{align*}
&(u^2-v^2)[s_{ij}(u),s_{kl}(v)]=(u+v)(s_{kj}(u)s_{il}(v)-s_{kj}(v)s_{il}(u))\\
&~~~~~~~~~~~~~~~~~~~~~~~~~~~~~~-(u-v)(\theta_k\theta_{j'}s_{ik'}(u)s_{j'l}(v)-\theta_i\theta_{l'}s_{ki'}(v)s_{l'j}(u))\\
&~~~~~~~~~~~~~~~~~~~~~~~~~~~~~~+\theta_i\theta_{j'}(s_{ki'}(u)s_{j'l}(v)-s_{ki'}(v)s_{j'l}(u)),
\end{align*}
and
\begin{align}
\theta_i\theta_js_{j'i'}(-u)=s_{ij}(u)\pm\frac{s_{ij}(u)-s_{ij}(-u)}{2},
\end{align}
where $i'=N-1-i$. Whenever the double sign $\pm$ or $\mp$ occurs, the upper sign corresponds to the $\mathfrak{so}_N$ case
and the lower sign to the $\mathfrak{sp}_N$ case.
\end{definition}
These relations can also be given in an equivalent matrix form. For this purpose we define the partial transpose
$R^{t}(u)$ for the Yang's R-matrix (\ref{R-matrix}) by

\begin{align}
R^{t}(u)=R(u)^{t_1}=1-Qu^{-1}, \quad Q=\sum_{i,j\in \mathbb{Z}_N}E^t_{ij}\otimes E_{ji}.
\end{align}
Furthermore, we regard $S(u)$ as an element of the algebra $Y(\mathfrak{g}_N)\otimes End\mathbb{C}^N$
given by
\begin{align*}
S(u)=\sum_{i, j\in \mathbb{Z}_N} s_{ij}(u)\otimes E_{ij}.
\end{align*}

Then the twisted Yangian can be characterized by the following relations \cite{M}:
\begin{equation}\label{reflectionrelation}
R(u-v)S_{1}(u)R^{t}(-u-v)S_{2}(v)=S_{2}(v)R^{t}(-u-v)S_{1}(u)R(u-v),
\end{equation}
and the symmetric relation:
\begin{equation}\label{symmetricrelation}
S^{t}(-u)=S(u)\pm \frac{S(u)-S(-u)}{2u}.
\end{equation}

The following relations between twisted Yangian and classical Lie
algebras will be useful.

\begin{proposition}\cite{M}\label{proposition3.3}
The assignment
$$s_{ij}(u)\mapsto \delta_{ij}+(u\pm\frac{1}{2})^{-1}F_{ij}$$ defines
a homomorphism between $Y(\mathfrak{g}_N)$ and $U(\mathfrak{g}_N)$. Moreover, the assignment
$$F_{ij}\mapsto s^{(1)}_{ij} $$
defines an embedding $U(\mathfrak{g}_N)\hookrightarrow Y(\mathfrak{g}_N)$.
\end{proposition}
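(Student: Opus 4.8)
The plan is to verify Proposition \ref{proposition3.3} directly by substituting the proposed series into the defining relations and checking them order by order in $u^{-1}$, rather than appealing to the matrix (reflection equation) form. First I would recall that the corresponding statement for $Y(\mathfrak{gl}_N)$ is classical: the map $t_{ij}(u)\mapsto \delta_{ij}+u^{-1}E_{ij}$ is an algebra homomorphism $Y(\mathfrak{gl}_N)\to U(\mathfrak{gl}_N)$ because the relations \eqref{definingrelation1} collapse, under this specialization, to the commutation relations of $\mathfrak{gl}_N$ together with the fact that only the $r=s=0$ and adjacent terms survive. Since $Y(\mathfrak{g}_N)$ sits inside $Y(\mathfrak{gl}_N)$ via the embedding theorem (this is Olshanski's result, recorded in \cite{M}) with $S(u)=T(u)\,\sigma(T(-u))^{-1}$ for the appropriate automorphism, one clean route is to push the known $\mathfrak{gl}_N$ homomorphism through this embedding: under $t_{ij}(u)\mapsto \delta_{ij}+u^{-1}E_{ij}$ the element $S(u)$ maps to $I+u^{-1}E - u^{-1}E^t + O(u^{-2})$, and one checks that the leading $u^{-1}$-coefficient is exactly $F_{ij}$, while the shift $u\mapsto u\pm\tfrac12$ is forced by the symmetric relation \eqref{symmetricrelation}.

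In more detail, I would carry out the following steps. Step one: substitute $s_{ij}(u)=\delta_{ij}+(u\pm\tfrac12)^{-1}F_{ij}$ into the symmetric relation \eqref{symmetricrelation} and confirm it holds identically; this uses only $F_{ij}^t := \theta_i\theta_j F_{j'i'} = -F_{ij}$ (equivalently $F_{ij}=E_{ij}-E^t_{ij}$ is antisymmetric with respect to the given transpose) and a short computation with the scalar factors $(u\pm\tfrac12)^{-1}$ and $(-u\pm\tfrac12)^{-1}$. This step pins down why the half-integer shift appears. Step two: substitute the same series into the quadratic defining relation and expand. Write $c=u\pm\tfrac12$, $d=v\pm\tfrac12$, so $u-v=c-d$ and $u+v=c+d\mp1$. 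The left side becomes $(u^2-v^2)\,(cd)^{-1}[F_{ij},F_{kl}]$ up to the scalar; the right side becomes a sum of products of two $F$'s with rational coefficients in $u,v$. Collecting the coefficient of $(cd)^{-1}$ on both sides, after multiplying through and simplifying, must reduce to the identity $[F_{ij},F_{kl}] = \delta_{kj}F_{il}-\delta_{il}F_{kj} - \theta_i\theta_{j'}\delta_{kl'}F_{j'l}\cdots$, i.e. precisely the structure constants of $\mathfrak{g}_N$ in the $F$-basis. Step three: having checked that the relations are satisfied, conclude the homomorphism exists by the universal property of $Y(\mathfrak{g}_N)$ as presented by generators and relations.

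For the embedding $U(\mathfrak{g}_N)\hookrightarrow Y(\mathfrak{g}_N)$, $F_{ij}\mapsto s^{(1)}_{ij}$, I would first check it is an algebra homomorphism: extract the $(u^{-1},v^0)$ and $(u^{-1},v^{-1})$ coefficients from the quadratic relation and the $u^{-1}$ coefficient from the symmetric relation to see that the elements $s^{(1)}_{ij}\in Y(\mathfrak{g}_N)$ satisfy $[s^{(1)}_{ij},s^{(1)}_{kl}]$ equal to the $\mathfrak{g}_N$ bracket applied to the $s^{(1)}$'s, and that $\theta_i\theta_j s^{(1)}_{j'i'}=-s^{(1)}_{ij}$; these are exactly the defining relations of $U(\mathfrak{g}_N)$ in the $F$-basis, so the map is well defined. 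Injectivity then follows by composing with the homomorphism of the first part: the composite $U(\mathfrak{g}_N)\to Y(\mathfrak{g}_N)\to U(\mathfrak{g}_N)$ sends $F_{ij}\mapsto s^{(1)}_{ij}\mapsto F_{ij}$ (the $u^{-1}$-coefficient of $\delta_{ij}+(u\pm\tfrac12)^{-1}F_{ij}$ is $F_{ij}$), hence is the identity, so the first arrow is injective.

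The main obstacle I anticipate is the bookkeeping in step two: the quadratic relation has six terms with signs $\theta_i,\theta_k,\theta_{j'},\theta_{l'}$ sprinkled through, and matching the rational functions of $u,v$ correctly while keeping track of which index reflections $i\mapsto i'=N-1-i$ occur is error-prone. The conceptual content is light — everything reduces to Lie-algebra structure constants plus a half-integer shift forced by the symmetric relation — but the computation must be organized carefully, ideally by first clearing the denominator $(u^2-v^2)$, then substituting $u\to u\mp\tfrac12$, $v\to v\mp\tfrac12$ at the very end, or equivalently by routing through the already-established $Y(\mathfrak{gl}_N)$ result and the explicit form $S(u)=T(u)\,T'(-u)$ of the Olshanski embedding, which sidesteps most of the sign-chasing. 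I would present the proof via the latter route for brevity, citing \cite{M} for the embedding $Y(\mathfrak{g}_N)\hookrightarrow Y(\mathfrak{gl}_N)$ and the known $\mathfrak{gl}_N$ evaluation homomorphism.
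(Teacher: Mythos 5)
The paper offers no proof of this proposition: it is quoted from Molev's book \cite{M} and used as a black box, so there is no internal argument to compare yours against. Your primary plan (steps one through three: verify the symmetric relation, then expand the quadratic relation and match it against the structure constants of $\mathfrak{g}_N$ in the $F$-basis, then invoke the presentation by generators and relations; and for the second half, extract low-order coefficients and prove injectivity by composing the two maps and observing the composite fixes each $F_{ij}$) is exactly the standard argument in \cite{M}, and the injectivity argument in particular is clean and correct.

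However, the route you say you would actually present ``for brevity'' --- pushing the evaluation homomorphism $t_{ij}(u)\mapsto\delta_{ij}+u^{-1}E_{ij}$ through the Olshanski embedding --- does not prove the proposition as stated. The embedding is $S(u)=T(u)\,T^{t}(-u)$ (not $T(u)\,\sigma(T(-u))^{-1}$), and under the evaluation map its image is
\begin{equation*}
\bigl(1+u^{-1}E\bigr)\bigl(1-u^{-1}E^{t}\bigr)=1+u^{-1}F-u^{-2}EE^{t},
\end{equation*}
where $E=\sum E_{ij}\otimes E_{ij}$ in the obvious notation. The $u^{-2}$ term $-EE^{t}$ is not $\mp\tfrac12 F$, so this composite is a \emph{different} homomorphism from $S(u)\mapsto 1+(u\pm\tfrac12)^{-1}F$; moreover it takes values in $U(\mathfrak{gl}_N)$ rather than $U(\mathfrak{g}_N)$. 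The half-integer shift is not merely ``forced by the symmetric relation'' on top of this computation --- one cannot repair the $O(u^{-2})$ discrepancy by reparametrizing $u$. So if you want a short proof you must either carry out the direct verification of the reflection and symmetry relations for $S(u)=1+(u\pm\tfrac12)^{-1}F$ (your step two, best organized in matrix form using $F+F^{t}=0$ and $PF_1P=F_2$, $QF_1Q=\mp QF_1$-type identities), or cite \cite{M} outright as the paper does; the embedding shortcut as described is a genuine gap.
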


\section{Principal realization of $Y(\mathfrak{g}_N)$}

In this section we give the principal realization for twisted Yangian $Y(\mathfrak{g}_N)$ analogous to the $Y(\mathfrak{gl}_N)$ case. As before the generators are certain Fourier coefficients.

We start by recalling a well-known result in linear algebra.
Let $\{e_i\}$ and $\{e^i\}$ be a pair of dual bases of simple Lie
algebra $\mathfrak{g}$, then the rational $r$-matrix can be expressed as
follows (cf. \cite{BGJ}):
\begin{equation}\label{rmatrix}
r=\sum e_i\otimes e^i.
\end{equation}
Moreover, this expression is independent of the choice of the dual bases.

Using the principal basis of $\mathfrak{gl}_N$, we get the following result.
\vskip.2in
\begin{lemma}
The permutation matrix $P$ can be
written as
\begin{equation}
P=\sum_{i,j}E_{ij}\otimes E_{ji}=\sum_{k,l\in \mathbb{Z}_N}\frac{\omega^{kl}}{N}A_{kl}\otimes A_{-k,-l}.
\end{equation}
The partial transposition of $P$ can be written as
\begin{align*}
Q=P^{t_1}=\sum_{k,l\in \mathbb{Z}_N}\frac{\omega^{kl}}{N}A_{kl}^{t}\otimes A_{-k,-l}=\sum_{k,l,a,b}\frac{\omega^{-a(k+b)-k}}{N^2}\theta_{a,a+l}A_{bl}\otimes A_{-k,-l}
\end{align*}
In particular, when $\mathfrak{g}_N=\mathfrak{so}_N$
\begin{equation}\label{equation4.3}
Q=P^{t_1}=\sum_{k,l\in \mathbb{Z}_N}\frac{\omega^{kl}}{N}A_{kl}^{t}\otimes A_{-k,-l}=
\sum_{k,l\in \mathbb{Z}_N}\frac{\omega^{-k}}{N}A_{-k,l}\otimes A_{-k,-l}.
\end{equation}
\end{lemma}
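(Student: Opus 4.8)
The plan is to leverage the coordinate-free description of the split Casimir recalled in~\eqref{rmatrix}: for any pair of dual bases $\{e_i\},\{e^i\}$ of $\mathfrak{gl}_N$ with respect to the form $(x|y)=\operatorname{tr}(xy)$, the tensor $\sum_i e_i\otimes e^i$ does not depend on the pair. First I would observe that $\{E_{ij}\}$ and $\{E_{ji}\}$ form such a dual pair, since $(E_{ij}|E_{kl})=\delta_{jk}\delta_{il}$, and that the associated Casimir $\sum_{i,j}E_{ij}\otimes E_{ji}$ is literally the permutation matrix $P$ (acting on $e_a\otimes e_b$ it returns $e_b\otimes e_a$). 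On the other side, the excerpt records that the principal basis $\{A_{ij}\}$ has dual basis $\{\tfrac{\omega^{ij}}{N}A_{-i,-j}\}$; feeding this pair into $\sum e_i\otimes e^i$ and relabelling the indices as $k,l$ gives $P=\sum_{k,l}\tfrac{\omega^{kl}}{N}A_{kl}\otimes A_{-k,-l}$, which is the first assertion.

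For $Q=P^{t_1}$ I would apply the transpose $t$ to the first tensor leg of the principal-basis expansion of $P$ termwise, obtaining immediately $Q=\sum_{k,l}\tfrac{\omega^{kl}}{N}A_{kl}^{t}\otimes A_{-k,-l}$. Then I would substitute the expression for $A_{kl}^{t}$ from Proposition~\ref{P:dualmatirx}, after relabelling its internal summation indices as $a,b$ so as not to clash with $k,l$, namely $A_{kl}^{t}=\tfrac{\omega^{-k(1+l)}}{N}\sum_{a,b}\theta_a\theta_{a+l}\,\omega^{-a(k+b)}A_{bl}$. The only arithmetic needed is $\omega^{kl}\omega^{-k(1+l)}=\omega^{-k}$, after which the scalar factors combine into $\tfrac{\omega^{-a(k+b)-k}}{N^2}\theta_{a,a+l}$, yielding the stated quadruple sum.

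Finally, for the orthogonal case I would specialize $\theta_i\equiv 1$ and use either route: plug the sharpened identity $A_{kl}^{t}=\omega^{-k(1+l)}A_{-k,l}$ of Proposition~\ref{P:dualmatirx} directly into $Q=\sum_{k,l}\tfrac{\omega^{kl}}{N}A_{kl}^{t}\otimes A_{-k,-l}$, or perform the character sum $\sum_{a}\omega^{-a(k+b)}=N\delta_{b,-k}$ in the general formula; both collapse $Q$ to $\sum_{k,l}\tfrac{\omega^{-k}}{N}A_{-k,l}\otimes A_{-k,-l}$, which is~\eqref{equation4.3}.

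I expect no serious obstacle: conceptually the heart of the argument is just the observation that $P$ is the basis-independent split Casimir, so that passing to the principal basis is immediate, followed by a direct substitution. The only point requiring care is clerical — keeping the two layers of dummy indices separate and tracking the accumulated powers of $\omega$ — but the reductions $\omega^N=1$ and $\theta_{N-1-i}=-\theta_i$ are already absorbed into Proposition~\ref{P:dualmatirx}, so nothing new is needed here.
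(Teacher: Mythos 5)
Your proposal is correct and follows the same route as the paper: identify $P$ as the basis-independent split Casimir via~\eqref{rmatrix}, expand it in the dual pair $\{A_{kl}\}$, $\{\tfrac{\omega^{kl}}{N}A_{-k,-l}\}$, and then substitute the formula for $A_{kl}^{t}$ from Proposition~\ref{P:dualmatirx} (with the simplification $\omega^{kl}\omega^{-k(1+l)}=\omega^{-k}$) to obtain the general and orthogonal cases. No gaps.
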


\begin{proof} Note that $\{A_{kl}\}$ and $\{\frac{\omega^{kl}}{N}A_{-k,-l}\}$
are dual principal basis of $\mathfrak{gl}_N$. Invoking (\ref{rmatrix}) we have
$$P=\sum_{k,l\in \mathbb{Z}_N}\frac{\omega^{kl}}{N}A_{kl}\otimes A_{-k,-l}.$$
Using Proposition \ref{P:dualmatirx}, we get
\begin{align*}
&Q=\sum_{k,l\in \mathbb{Z}_N}\frac{\omega^{kl}}{N}A_{kl}^{t}\otimes A_{-k,-l}\\
&~~=\sum_{k,l,a,b}\frac{\omega^{-a(k+b)-k}}{N^2}\theta_{a,a+l}A_{bl}\otimes A_{-k,-l}.
\end{align*}
And in the case of $\mathfrak{g}_N=\mathfrak{so}_N$,
\begin{align*}
&Q=\sum_{k,l\in \mathbb{Z}_N}\frac{\omega^{kl}}{N}A_{kl}^{t}\otimes A_{-k,-l}\\
&~~=\sum_{k,l\in \mathbb{Z}_N}\frac{\omega^{-k}}{N}A_{-k,l}\otimes A_{-k,-l}.
\end{align*}
\end{proof}

We now introduce a new set of generators $y^{(r)}_{ij} $ of twisted Yangian $Y(\mathfrak{g}_N)$, where $i,j\in \mathbb{Z}_N$,
$r\in\mathbb Z_+.$
 Rewrite the matrix of generators $S(u)$ as follows:
 \begin{equation}
 S(u)=\sum_{i,j}s_{ij}(u)\otimes E_{ij}=\sum_{k,l\in \mathbb{Z}_N}y_{kl}(u)\otimes A_{kl},
 \end{equation}
 where $y_{kl}(u)'$s are the generating series defined by:
 $$y_{kl}(u)=\sum_{r=0}^{\infty}y^{(r)}_{kl}u^{-r},$$
 and $y^{(0)}_{kl}=\delta_{k,0}\delta_{l,0}$.
 \vskip.1in

\vskip.2in
\begin{theorem} \label{thm1} The principal generators
$y_{ij}^{(r)}$ of the Yangian $Y(\mathfrak{g}_N)$ satisfy the
following relations: 
\begin{align*}
&(u^2-v^2)[y_{ij}(u),y_{kl}(v)]=\\
&\frac{u+v}{N}\sum_{a,b\in\mathbb{Z}_N}(\omega^{b(i-k-a)}y_{i-a,j-b}(u)y_{k+a,l+b}(v)-\omega^{a(j-l-b)}y_{k+a,l+b}(v)y_{i-a,j-b}(u))\\
&+\frac{u-v}{N^2}\sum_{a,b,a',b'\in
\mathbb{Z}_N}\theta_{a'}\theta_{a'+b}\omega^{-a-a'(a+b')+b'(j-b)-b(k+a))}
y_{i-b',j-b}(u)y_{k+a,l+b}(v)\\
&-\frac{u-v}{N^2}\sum_{a,b,a',b'\in
\mathbb{Z}_N}\theta_{a'}\theta_{a'+b}\omega^{-a-a'(a+b')+b(i-b')-a(l+b))}y_{k+a,l+b}(v)y_{i-b',j-b}(v)\\
&+\frac{1}{N^3}\sum_{a,b,a',b',c,d\in\mathbb{Z}_N}\theta_{a'}\theta_{a'+b}\omega^{-a-a'(a+b')+b'(j-b-d)+b(k+a+c)+d(i-k-c)}
y_{i-c-b',j-d-b}(u)y_{k+a+c,l+b+d}(v)\\
&-\frac{1}{N^3}\sum_{a,b,a',b',c,d\in
\mathbb{Z}_N}\theta_{a'}\theta_{a'+b}\omega^{-a-a'(a+b')+b(i-a-b')+a(l+b+d)+c(j-l-d)}y_{k+a+c,l+b+d}(v)y_{i-c-b',j-d-b}(u)
\end{align*}
and the symmetric relation:
$$\frac{1}{N}\sum_{k,l}\theta_{k}\theta_{k+j}\omega^{-l(1+j)-k(i+l)}y_{l,j}(-u)
=y_{ij}(u)\pm\frac{y_{ij}(u)-y_{ij}(-u)}{2u}.$$
In particular, when $\mathfrak{g}_N=\mathfrak{so}_N$ the defining relations can be reduced to the following relations:
\begin{align*}
&(u^2-v^2)[y_{ij}(u),y_{kl}(v)]=\\
&\frac{(u+v)}{N}\sum_{a,b\in\mathbb{Z}_N}(\omega^{b(i-k-a)}y_{i-a,j-b}(u)y_{k+a,l+b}(v)-\omega^{a(j-l-b)}y_{k+a,l+b}(v)y_{i-a,j-b}(u))\\
&+\frac{(u-v)}{N}\sum_{a,b\in\mathbb{Z}_N}(\omega^{-a+bi-la}y_{k+a,l+b}(u)y_{i+a,j-b}(v)-\omega^{-a-aj-bk}y_{i+a,j-b}(u)y_{k+a,l+b}(v))\\
&+\frac{1}{N^2}\sum_{a,b,a',b'\in\mathbb{Z}_N}\omega^{-a-aj+b'(a-k+i-a')+b(k+a')}y_{i+a-a',j-b-b'}(u)y_{k+a+a',l+b+b'}(v)\\
&-\frac{1}{N^2}\sum_{a,b,a',b'\in
\mathbb{Z}_N}\omega^{-a+bi+a'(j-b-b'-l)-a(l+b')}y_{k+a+a',l+b+b'}(v)y_{i+a-a',j-b-b'}(u)
\end{align*}
and the symmetric relation:
\begin{equation*}
\omega^{i(1+j)}y_{-i,j}(-u)=y_{ij}(u)+\frac{1}{2u}(y_{ij}(u)-y_{ij}(-u)).
\end{equation*}
\end{theorem}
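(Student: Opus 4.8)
The plan is to read off both families of relations from the matrix presentation of $Y(\mathfrak g_N)$, i.e. from the reflection equation (\ref{reflectionrelation}) and the symmetry relation (\ref{symmetricrelation}), after rewriting every matrix that appears in the principal basis of $\mathfrak{gl}_N$. This is the exact analogue of deriving Theorem \ref{new} from the $RTT$ presentation in the $\mathfrak{gl}_N$ case, now with $R^t(-u-v)$ inserted and with the dual-basis expansions of $P$ and $Q$ supplied by the preceding Lemma.

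First I would substitute $R(u-v)=1-(u-v)^{-1}P$ and $R^t(-u-v)=1+(u+v)^{-1}Q$ into (\ref{reflectionrelation}), expand, collect the coefficients of $(u-v)^{-1}$, $(u+v)^{-1}$ and $(u^2-v^2)^{-1}$, and multiply through by $u^2-v^2$. This converts (\ref{reflectionrelation}) into a single matrix identity whose right-hand side is a sum of a $(u+v)$-multiple of $PS_1(u)S_2(v)-S_2(v)S_1(u)P$, a $(u-v)$-multiple of $S_1(u)QS_2(v)-S_2(v)QS_1(u)$, and the constant term $PS_1(u)QS_2(v)-S_2(v)QS_1(u)P$ (with signs to be pinned down). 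Into this I would plug $S(u)=\sum_{k,l}y_{kl}(u)\otimes A_{kl}$ together with the expressions for $P$ and $Q$ in the principal basis from the Lemma above. Each monomial then becomes a $y$-coefficient times a product $A_{\ast\ast}A_{\ast\ast}\cdots\otimes A_{\ast\ast}A_{\ast\ast}\cdots$ in $\operatorname{End}\mathbb C^N\otimes\operatorname{End}\mathbb C^N$, which I would collapse using $A_{ij}A_{kl}=\omega^{jk}A_{i+k,j+l}$, so that both sides take the form $\sum(\text{scalar})\,y\cdots y\cdots\otimes A_{pq}\otimes A_{rs}$, the scalars being products of powers of $\omega$ and of the $\theta$'s carried by $Q$. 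Since $\{A_{ij}\otimes A_{kl}\}$ is a basis of $\operatorname{End}\mathbb C^N\otimes\operatorname{End}\mathbb C^N$ — equivalently, by pairing with the dual basis $\{\tfrac{\omega^{ij}}{N}A_{-i,-j}\otimes\tfrac{\omega^{kl}}{N}A_{-k,-l}\}$ under the trace form — equating the coefficient of a fixed $A_{pq}\otimes A_{rs}$ and relabelling summation indices gives precisely the displayed commutation relation, the three groups of terms on the right matching the $(u+v)$-line, the two $(u-v)$-lines, and the two $N^{-3}$-lines.

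For the symmetry relation I would write $S^t(-u)=\sum_{k,l}y_{kl}(-u)\otimes A_{kl}^t$, insert the formula $A_{kl}^{t}=\frac{\omega^{-k(1+l)}}{N}\sum_{m,n}\theta_m\theta_{m+l}\omega^{-m(k+n)}A_{nl}$ from Proposition \ref{P:dualmatirx}, and compare the coefficient of $A_{ij}$ on the two sides of (\ref{symmetricrelation}); this yields $\frac1N\sum_{k,l}\theta_k\theta_{k+j}\omega^{-l(1+j)-k(i+l)}y_{l,j}(-u)=y_{ij}(u)\pm\frac{y_{ij}(u)-y_{ij}(-u)}{2u}$. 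For the $\mathfrak{g}_N=\mathfrak{so}_N$ specialization I would set all $\theta_i=1$, so that $Q$ collapses to the single-sum form (\ref{equation4.3}); re-running the computation with this simpler $Q$ eliminates two of the Fourier summations and produces the four-line relation, while in the symmetry relation the sum over $k$ becomes a geometric sum equal to $N\delta_{l,-i}$, giving $\omega^{i(1+j)}y_{-i,j}(-u)=y_{ij}(u)+\frac1{2u}(y_{ij}(u)-y_{ij}(-u))$. As a cross-check one can instead Fourier-transform the component defining relations directly, using $s_{m,m+l}(u)=\sum_k\omega^{mk}y_{kl}(u)$ and its inverse $y_{kl}(u)=\frac1N\sum_m\omega^{-mk}s_{m,m+l}(u)$.

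The computation is routine in principle, so the main obstacle is purely the bookkeeping: one must track four to six summation indices through the successive shifts coming from $A_{ij}A_{kl}=\omega^{jk}A_{i+k,j+l}$ while carrying the products $\theta_c\theta_{c+b}$ contributed by $Q$, and the general $\mathfrak{sp}_N$ case is genuinely heavier than $\mathfrak{so}_N$ precisely because those $\theta$-factors obstruct the collapse $\sum_l\omega^{(m-k)l}=N\delta_{m,k}$ that trivializes the orthogonal case. I would organize the work by treating the six atomic pieces $PS_1S_2$, $S_2S_1P$, $S_1QS_2$, $S_2QS_1$, $PS_1QS_2$, $S_2QS_1P$ separately, matching each with exactly one displayed sum, and I would verify every $\omega$-exponent by specializing to $N=1$ and $N=2$.
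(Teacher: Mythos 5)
Your proposal is correct and follows essentially the same route as the paper: expand the reflection equation with $R=1-\frac{P}{u-v}$ and $R^t=1+\frac{Q}{u+v}$, multiply by $u^2-v^2$ to isolate the three bracketed pieces $PS_1S_2-S_2S_1P$, $S_2QS_1-S_1QS_2$, $PS_1QS_2-S_2QS_1P$, compute each in the principal basis via the Lemma's expansions of $P$ and $Q$ and the product rule $A_{ij}A_{kl}=\omega^{jk}A_{i+k,j+l}$, and extract coefficients of $A_{ij}\otimes A_{kl}$; the symmetric relation is likewise obtained by expanding $S^t(-u)$ with Proposition \ref{P:dualmatirx} and comparing coefficients. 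The paper writes out only the $\mathfrak{so}_N$ case in detail (as you anticipate, the collapse $\sum_l\omega^{(m-k)l}=N\delta_{m,k}$ is what simplifies it), so your outline matches its proof in both structure and substance.
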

Here we only prove the case $\mathfrak{g}_N=\mathfrak{so}_N$ as the other case is similar.
In order to prove the theorem, we need the following lemma:
\begin{lemma}\label{lemma4.3}
We have the following equations for $\mathfrak{g}_N=\mathfrak{so}_N$:
\begin{equation}
\begin{aligned}
PS_{1}(u)S_2(v)-S_{2}(v)S_{1}(u)P=\frac{1}{N}\sum_{a,b,i,j,k,l\in\mathbb{Z}_N}(\omega^{b(i-k-a)}y_{i-a,j-b}(u)y_{k+a,l+b}(v)\\
-\omega^{a(j-l-b)}y_{k+a,l+b}(v)y_{i-a,j-b}(u))A_{ij}\otimes A_{kl}.
\end{aligned}
\end{equation}

\begin{equation}\label{equation4.6}
\begin{aligned}
S_{2}(v)QS_{1}(u)-S_{1}(u)QS_{2}(v)
=\frac{1}{N}\sum_{a,b,i,j,k,l\in\mathbb{Z}_N}(\omega^{-a+bi-la}y_{k+a,l+b}(u)y_{i+a,j-b}(v)\\
-\omega^{-a-aj-bk}y_{i+a,j-b}(u)y_{k+a,l+b}(v))A_{ij}\otimes A_{kl}.
\end{aligned}
\end{equation}
\begin{equation}
\begin{aligned}
&PS_{1}(u)QS_{2}(v)-S_{2}(v)QS_{1}(u)P=\\
&\frac{1}{N^2}\sum_{a,b,a',b',i,j,k,l\in\mathbb{Z}_N}\omega^{-a-aj+b'(a-k+i-a')+b(k+a')}y_{i+a-a',j-b-b'}(u)y_{k+a+a',l+b+b'}(v) A_{ij}\otimes A_{kl}-\\
&\frac{1}{N^2}\sum_{a,b,a',b',i,j,k,l\in\mathbb{Z}_N}\omega^{-a+bi+a'(j-b-b'-l)-a(l+b')}y_{k+a+a',l+b+b'}(v)y_{i+a-a',j-b-b'}(u) A_{ij}\otimes A_{kl}.
\end{aligned}
\end{equation}

\end{lemma}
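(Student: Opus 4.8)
The plan is to prove all three identities by expanding both sides directly in the principal basis $\{A_{ij}\otimes A_{kl}\}$ of $\mathrm{End}(\mathbb{C}^N)\otimes\mathrm{End}(\mathbb{C}^N)$ and matching coefficients, which is legitimate since that set is a basis. The only structural input needed is the multiplication rule $A_{ij}A_{kl}=\omega^{jk}A_{i+k,j+l}$ together with the principal-basis forms of $P$ and $Q$ supplied by the preceding Lemma: in the $\mathfrak{g}_N=\mathfrak{so}_N$ case, $P=\frac1N\sum_{m,n}\omega^{mn}A_{mn}\otimes A_{-m,-n}$ and, by (\ref{equation4.3}), $Q=\frac1N\sum_{m,n}\omega^{-m}A_{-m,n}\otimes A_{-m,-n}$, while $S_1(u)=\sum_{p,q}y_{pq}(u)\otimes A_{pq}\otimes 1$ and $S_2(v)=\sum_{r,s}y_{rs}(v)\otimes 1\otimes A_{rs}$. (These three identities are precisely the cross-terms one needs when expanding the reflection relation (\ref{reflectionrelation}), collecting the coefficient of $A_{ij}\otimes A_{kl}$, and multiplying by $u^2-v^2$, which is how Theorem \ref{thm1} will follow.)

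For the first identity I would multiply $P\,S_1(u)\,S_2(v)$ slot by slot: in the first auxiliary factor $A_{mn}A_{pq}=\omega^{np}A_{m+p,n+q}$, in the second $A_{-m,-n}A_{rs}=\omega^{-nr}A_{-m+r,-n+s}$, and the Yangian factor gives $y_{pq}(u)y_{rs}(v)$, so that
$$P\,S_1(u)\,S_2(v)=\frac1N\sum_{m,n,p,q,r,s}\omega^{mn+np-nr}\,y_{pq}(u)y_{rs}(v)\otimes A_{m+p,n+q}\otimes A_{-m+r,-n+s}.$$
Setting $i=m+p$, $j=n+q$, $k=-m+r$, $l=-n+s$ and writing $a=m$, $b=n$, the exponent collapses to $\omega^{b(i-k-a)}$ and the coefficient of $A_{ij}\otimes A_{kl}$ becomes $\frac1N\sum_{a,b}\omega^{b(i-k-a)}y_{i-a,j-b}(u)y_{k+a,l+b}(v)$. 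An identical computation for $S_2(v)S_1(u)P$ produces the exponent $\omega^{a(j-l-b)}$ and the product $y_{k+a,l+b}(v)y_{i-a,j-b}(u)$; subtracting the two gives the claimed formula.

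The second identity is handled the same way with $Q$ in place of $P$: expand $S_2(v)Q S_1(u)$ and $S_1(u)Q S_2(v)$, collapse the threefold products in each slot with $A_{ij}A_{kl}=\omega^{jk}A_{i+k,j+l}$, relabel to the coefficient of $A_{ij}\otimes A_{kl}$, and read off the phases $\omega^{-a+bi-la}$ and $\omega^{-a-aj-bk}$; the extra $\omega^{-a}$ factors are exactly the $\omega^{-m}$ appearing in the expansion of $Q$. For the third identity I would expand $P\,S_1(u)\,Q\,S_2(v)$ and $S_2(v)\,Q\,S_1(u)\,P$; now each auxiliary slot carries a genuine threefold product $A\cdot A\cdot A$, so iterating the multiplication rule twice in each slot yields two $\omega$-phases per slot, and there are four independent summation families — two from $P$ and two from $Q$ (these are the $a,b$ and $a',b'$ of the statement), with the $S$-indices becoming $i,j,k,l$ after the linear change of variables. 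Solving that change of variables and collecting all the $\omega$-exponents (remembering that every index lives in $\mathbb{Z}_N$, so $\omega^N=1$) produces the two displayed sums. This last identity is where essentially all the work lies: the computation is entirely mechanical once the substitutions from the preceding Lemma are inserted, but keeping the four $\omega$-exponents straight through two rounds of multiplication and a four-parameter reindexing is the main obstacle; no input beyond $A_{ij}A_{kl}=\omega^{jk}A_{i+k,j+l}$ and the principal-basis forms of $P$ and $Q$ is required.
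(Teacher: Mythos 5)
Your proposal is correct and follows exactly the paper's own method: expand $P$, $Q$, $S_1$, $S_2$ in the principal basis, collapse products via $A_{ij}A_{kl}=\omega^{jk}A_{i+k,j+l}$, and reindex to read off the coefficient of $A_{ij}\otimes A_{kl}$ (the paper carries this out explicitly only for (\ref{equation4.6}) and declares the other two analogous). Your phase bookkeeping for the first two identities matches the paper's computation, so nothing further is needed.
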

\begin{proof}
Here we just check the equation (\ref{equation4.6}), and the other two equations can be proved by the same method.
Using the principal decomposition of the operator $Q$ (see formula (\ref{equation4.3})), we have
\begin{align*}
&S_{2}(v)QS_{1}(u)=(\sum_{k,l\in \mathbb{Z}_N}y_{kl}(v)1\otimes A_{kl})(\frac{1}{N}\sum_{a,b\in \mathbb{Z}_N}\omega^{-a}A_{-a,b}\otimes A_{-a,-b})
(\sum_{i,j\in \mathbb{Z}_N}y_{ij}(u)A_{ij}\otimes 1)\\
&~~~~~~~~~~~~~~~~~=\frac{1}{N}\sum_{a,b,i,j,k,l\in \mathbb{Z}_N}\omega^{-a}y_{kl}(v)y_{ij}(u)A_{-a,b}A_{ij}\otimes A_{kl}A_{-a,-b}\\
&~~~~~~~~~~~~~~~~~=\frac{1}{N}\sum_{a,b,i,j,k,l\in \mathbb{Z}_N}\omega^{-a+bi-la}y_{kl}(v)y_{ij}(u)A_{-a+i,b+j}\otimes A_{k-a,l-b}\\
&~~~~~~~~~~~~~~~~~~(Let ~~i'=-a+i,j'=b+j,k'=k-a,l'=l-b)\\
&~~~~~~~~~~~~~~~~~=\frac{1}{N}\sum_{a,b,i',j',k',l'\in \mathbb{Z}_N}\omega^{-a+bi'-l'a}y_{k'+a,l'+b}(v)y_{i'+a,j'-b}(u)A_{i'j'}\otimes A_{k'l'}\\
&~~~~~~~~~~~~~~~~~=\frac{1}{N}\sum_{a,b,i,j,k,l\in \mathbb{Z}_N}\omega^{-a+bi-la}y_{k+a,l+b}(v)y_{i+a,j-b}(u)A_{ij}\otimes A_{kl}.\\
\end{align*}
Similarly, we can get
$$S_{1}(u)QS_{2}(v)=\frac{1}{N}\sum_{a,b,i,j,k,l\in \mathbb{Z}_N}\omega^{-a-aj-bk}y_{i+a,j-b}(u)y_{k+a,l+b}(v))A_{ij}\otimes A_{kl}, $$
from which one gets Equation (\ref{equation4.6}).
\end{proof}
Now we prove Theorem \ref{thm1} (the case $\mathfrak{g}_N=\mathfrak{so}_N$) using the above lemma.
\begin{proof}
From the equation (\ref{reflectionrelation}), it follows that
\begin{align*}
&(u^2-v^2)(S_{1}(u)S_{2}(v)-S_{2}(v)S_{1}(u))=(u+v)(PS_{1}(u)S_2(v)-S_{2}(v)S_{1}(u)P)\\
&~~~~~~~~~~~~~~~~~~~~~~~~~~~~~~~~~+(u-v)(S_{2}(v)QS_{1}(u)-S_{1}(u)QS_{2}(v))\\
&~~~~~~~~~~~~~~~~~~~~~~~~~~~~~~~~~+(PS_{1}(u)QS_{2}(v)-S_{2}(v)QS_{1}(u)P).\\
\end{align*}
Then one derives the reflection relation in the Theorem \ref{thm1} by using the equations
in lemma \ref{lemma4.3}.
\vskip0.1in
Next we check the symmetric relation.
It follows from definition that the symmetric relation is
$$S^{t}(-u)=S(u)+\frac{S(u)-S(-u)}{2u}.$$
Using the principal presentation of $S(u)$ we have:
\begin{align*}
&S(u)=\sum_{i,j\in \mathbb{Z}_N}y_{ij}(u)\otimes A_{ij},\\
&S^{t}(-u)=\sum_{i,j\in \mathbb{Z}_N}y_{ij}(-u)\otimes A^{t}_{ij}\\
&~~~~~~~~~=\sum_{i,j\in \mathbb{Z}_N}\omega^{-i(1+j)}y_{ij}(-u)\otimes A_{-i,j}\\
\end{align*}
Then we can rewrite the symmetric relation as following:
\begin{align*}
\sum_{i,j\in \mathbb{Z}_N}\omega^{-i(1+j)}y_{ij}(-u)\otimes A_{-i,j}=\sum_{i,j\in \mathbb{Z}_N}y_{ij}(u)\otimes A_{ij}
+\frac{1}{2u}\sum_{i,j\in \mathbb{Z}_N}(y_{ij}(u)-y_{ij}(-u))\otimes A_{ij}.
\end{align*}
Therefore
\begin{equation*}
\omega^{i(1+j)}y_{-i,j}(-u)=y_{ij}(u)+\frac{1}{2u}(y_{ij}(u)-y_{ij}(-u)),
\end{equation*}
which is just the symmetric relation in the theorem.
\end{proof}
\vskip.2in

\begin{theorem}\label{thm2}
 The mapping $s_{ij}(u)\mapsto \sum_{k\in \mathbb{Z}_N}\omega^{ik}y_{k,j-i}(u)$ defines an
isomorphism of the two presentations of $Y(\mathfrak{g}_N)$. The inverse mapping is given by
$$y_{kl}(u)\mapsto \sum_{i\in \mathbb{Z}_N}\frac{\omega^{-ki}}{N}s_{i,i+l}(u)$$
\end{theorem}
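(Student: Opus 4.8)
The plan is to verify that the two assignments in the statement are mutually inverse algebra maps by tracking how $S(u)$ transforms between the two presentations, rather than by checking the explicit commutation relations term by term. The key observation is that both presentations describe the \emph{same} algebra element $S(u)$, merely expanded in two different bases of $\mathrm{End}(\mathbb C^N)$: the Cartan--Weyl basis $\{E_{ij}\}$ on the one hand, and the principal basis $\{A_{ij}\}$ on the other. So the content of the theorem is nothing but a change-of-basis computation, together with the observation that this change of basis is compatible with the defining relations (which it must be, since those relations were derived from the single equation $S(u)=\sum s_{ij}(u)\otimes E_{ij}=\sum y_{kl}(u)\otimes A_{kl}$ in the first place).

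First I would recall from Section 2 that $A_{ij}=\sum_{k\in\mathbb Z_N}\omega^{ki}E_{k,k+j}$, so that $\sum_{k,l}y_{kl}(u)\otimes A_{kl}=\sum_{k,l,m}\omega^{mk}y_{kl}(u)\otimes E_{m,m+l}$. Matching the coefficient of $E_{ij}$ (so $m=i$, $m+l=j$, i.e.\ $l=j-i$) gives $s_{ij}(u)=\sum_{k\in\mathbb Z_N}\omega^{ik}y_{k,j-i}(u)$, which is exactly the forward map asserted in the theorem. Next I would invert this relation using the discrete Fourier transform over $\mathbb Z_N$ exactly as was done for $Y(\mathfrak{gl}_N)$ in formula (\ref{prin1}): fixing $l=j-i$ and viewing $\{s_{i,i+l}(u)\}_{i\in\mathbb Z_N}$ as a sequence on $\mathbb Z_N$, the inversion formula $\mathfrak F^{-1}(g)(k)=\frac1N\sum_{i}\omega^{-ki}g(i)$ yields $y_{kl}(u)=\frac1N\sum_{i\in\mathbb Z_N}\omega^{-ki}s_{i,i+l}(u)$, which is the stated inverse map. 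This already shows the two assignments are mutually inverse linear (indeed $\mathbb C[[u^{-1}]]$-linear) bijections on the spans of generators.

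It remains to argue that the forward map is an algebra homomorphism, i.e.\ that it respects the defining relations. Here I would invoke Theorem \ref{thm1}: the relations satisfied by the $y_{ij}(u)$ were obtained precisely by substituting $S(u)=\sum y_{kl}(u)\otimes A_{kl}$ into the reflection equation (\ref{reflectionrelation}) and the symmetric relation (\ref{symmetricrelation}), while the relations satisfied by the $s_{ij}(u)$ come from substituting $S(u)=\sum s_{ij}(u)\otimes E_{ij}$ into the same two equations. Since the substitution $s_{ij}(u)=\sum_k\omega^{ik}y_{k,j-i}(u)$ is exactly the identification of these two expansions of one and the same $S(u)$, any consequence of (\ref{reflectionrelation}) and (\ref{symmetricrelation}) in one set of generators is carried to the corresponding consequence in the other; hence the map sends relations to relations and is a well-defined homomorphism, and likewise for its inverse. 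Thus both maps are algebra homomorphisms that are mutually inverse, so they are isomorphisms.

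The main obstacle is a bookkeeping one rather than a conceptual one: one must be careful that the presentation of $Y(\mathfrak g_N)$ in terms of the $s_{ij}^{(r)}$ is genuinely \emph{defined} by those relations (so that ``respects the defining relations'' suffices to give a homomorphism, with no hidden relations to check), and that the change of variables is invertible at the level of the full algebras and not merely formally on generating series — this is where one uses that $\{A_{ij}\}$ is an honest basis of $\mathrm{End}(\mathbb C^N)$ and that the Fourier matrix $(\omega^{ik})$ is invertible over $\mathbb C$, so no completions or convergence issues arise. Once that is in place, the verification is the routine Fourier computation sketched above, parallel to the $\mathfrak{gl}_N$ case in \cite{BGJ}.
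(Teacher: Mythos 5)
Your proposal is correct and follows essentially the same route as the paper: the forward map is obtained by expanding $A_{kl}=\sum_a\omega^{ak}E_{a,a+l}$ and matching coefficients of $E_{ij}$, and the inverse is the discrete Fourier inversion, which is exactly the ``quick'' argument the paper alludes to (its written-out alternative merely replaces the inversion formula by pairing against the dual basis $\frac{\omega^{kl}}{N}A_{-k,-l}$). Your extra paragraph justifying that the change of variables carries defining relations to defining relations is a point the paper leaves implicit, and is a welcome addition rather than a deviation.
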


 \begin{proof} This can be quickly shown by the inverse Fourier transform. For completeness we give
 another proof. From the principal realizations of twisted
Yangian $Y(\mathfrak{g}_N)$, we have
\begin{equation*}
S(u)=
\sum_{k,l\in \mathbb{Z}_N}y_{kl}(u)\otimes A_{kl},
\end{equation*}
where
$$A_{kl}=\sum_{a\in \mathbb{\mathbb{Z}}_N}\omega^{ak}E_{a,a+l}.$$
Plugging back into the equation, one has
\begin{align*}
S(u)&=
\sum_{k,l,a\in \mathbb{Z}_N}\omega^{ak}y_{kl}(u)\otimes E_{a,a+l}\\
&=\sum_{i,j,k}\omega^{ik}y_{k,j-i}(u)\otimes E_{ij},
\end{align*}
which shows that
$$s_{ij}(u)=\sum_{k\in\mathbb{Z}_N}\omega^{ik}y_{k,j-i}.$$
\vskip.1in
Since $A_{k,l}$ and $\frac{\omega^{kl}}{N}A_{-k,-l}$ are dual bases of $\mathfrak{gl}_N$, it follows that:
\begin{align*}
&y_{kl}(u)=(S(u)|A_{-k,-l})\frac{\omega^{kl}}{N}\\
&~~~~~~~~~=\sum_{i,j}\frac{\omega^{kl}}{N}s_{ij}(u)(E_{ij}|A_{-k,-l})\\
&~~~~~~~~~=\sum_{i,j}\frac{\omega^{k(l-j)}}{N}\delta_{i,j-l}s_{ij}(u)\\
&~~~~~~~~~=\sum_{i}\frac{\omega^{-ki}}{N}s_{i,i+l}(u)
\end{align*}
 \end{proof}
From the Theorem \ref{thm2}, we know that
as in the case $Y(\mathfrak{gl}_N)$, the principal generators $y_{ij}(u)$ of $Y(\mathfrak{g}_N)$ are actually  obtained from the Fourier transform of the sequence $\{s_{k,k+j}(u)\}$, where $k\in \mathbb{Z}_N$.
$$y_{ij}(u)=\mathfrak{F}^{-1}(\{s_{k,k+j}(u)\})(i).$$
\vskip.2in
 By Theorem \ref{thm2} we can get the following algebra homomorphism between the twisted Yangian  $Y(\mathfrak{g}_N)$ and the universal enveloping algebra $U(\mathfrak{g}_N)$.
\vskip.2in
\begin{theorem}
The assignment
 $$y_{kl}(u)\mapsto\frac{1}{N}\delta_{k,0}\delta_{l,0}+\frac{(u\pm\frac{1}{2})^{-1}}{N}B_{-k,l}$$
gives a homomorphism between $Y(\mathfrak{g}_N)$ and $U(\mathfrak{g}_N)$. Moreover, the assignment
$$B_{ij}\mapsto Ny^{(1)}_{-i,j}$$
defines an embedding $U(\mathfrak{g}_N)\hookrightarrow Y(\mathfrak{g}_N)$.
\end{theorem}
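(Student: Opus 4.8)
The plan is to obtain both assertions formally, by composing the isomorphism of presentations of Theorem~\ref{thm2} with Molev's homomorphism and embedding of Proposition~\ref{proposition3.3}. Since a composite of an algebra homomorphism with an algebra isomorphism is again an algebra homomorphism, and a composite of an embedding with an isomorphism is again an embedding, none of the defining relations of $Y(\mathfrak g_N)$ needs to be rechecked; all that remains is to carry out the discrete Fourier bookkeeping and read off the resulting formulas.

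For the homomorphism $Y(\mathfrak g_N)\to U(\mathfrak g_N)$ I would take the inverse map $y_{kl}(u)\mapsto\sum_{i\in\mathbb Z_N}\frac{\omega^{-ki}}{N}s_{i,i+l}(u)$ of Theorem~\ref{thm2} and post-compose it with the homomorphism $s_{ij}(u)\mapsto\delta_{ij}+(u\pm\frac12)^{-1}F_{ij}$ of Proposition~\ref{proposition3.3}. This sends
\begin{align*}
y_{kl}(u)\longmapsto\frac1N\sum_{i\in\mathbb Z_N}\omega^{-ki}\Bigl(\delta_{l,0}+(u\pm\tfrac12)^{-1}F_{i,i+l}\Bigr).
\end{align*}
By orthogonality of the characters, $\sum_{i\in\mathbb Z_N}\omega^{-ki}=N\delta_{k,0}$, so the first summand contributes a scalar multiple of $\delta_{k,0}\delta_{l,0}$, while the second equals $\frac{(u\pm\frac12)^{-1}}{N}\sum_{i\in\mathbb Z_N}\omega^{-ki}F_{i,i+l}$. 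Recognizing $\sum_{i\in\mathbb Z_N}\omega^{-ki}F_{i,i+l}=\mathfrak F(\{F_{i,i+l}\})(-k)=B_{-k,l}$, which is exactly the Fourier-transform description of $B_{ij}$ recorded just after Proposition~\ref{P:dualmatirx}, gives the image asserted in the statement, and it is a homomorphism because it is the composite of two homomorphisms.

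For the embedding I would instead pre-compose Molev's embedding $F_{ij}\mapsto s^{(1)}_{ij}$ with the presentation isomorphism of Theorem~\ref{thm2}. From $s_{ij}(u)=\sum_{a\in\mathbb Z_N}\omega^{ia}y_{a,j-i}(u)$ one gets $s^{(1)}_{k,k+j}=\sum_{m\in\mathbb Z_N}\omega^{km}y^{(1)}_{m,j}$, and combining this with $B_{ij}=\sum_{k\in\mathbb Z_N}\omega^{ki}F_{k,k+j}$ yields
\begin{align*}
B_{ij}\longmapsto\sum_{k\in\mathbb Z_N}\omega^{ki}s^{(1)}_{k,k+j}=\sum_{k,m\in\mathbb Z_N}\omega^{k(i+m)}y^{(1)}_{m,j}=N\,y^{(1)}_{-i,j},
\end{align*}
again by character orthogonality. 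As the composite of an injective map with an isomorphism, this is injective, so $B_{ij}\mapsto Ny^{(1)}_{-i,j}$ is the desired embedding $U(\mathfrak g_N)\hookrightarrow Y(\mathfrak g_N)$.

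The step to watch is not a conceptual obstacle but a matter of consistency: one must line up the $1/N$ normalizations, the signs of the exponents ($\omega^{-ki}$ versus $\omega^{ki}$) and the index shift $l\leftrightarrow j-i$ appearing in Theorem~\ref{thm2}, so that the transform occurring in the computation is literally $B_{ij}=\mathfrak F(\{F_{k,k+j}\})(i)$; once that is confirmed, both statements are immediate from Theorem~\ref{thm2} and Proposition~\ref{proposition3.3}, and no further work is needed.
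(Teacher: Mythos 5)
Your proposal is correct and follows essentially the same route as the paper: compose the presentation isomorphism of Theorem \ref{thm2} with the homomorphism and embedding of Proposition \ref{proposition3.3}, then identify $\sum_{i}\omega^{-ki}F_{i,i+l}=B_{-k,l}$ by character orthogonality. You even spell out the embedding computation $B_{ij}\mapsto\sum_{k,m}\omega^{k(i+m)}y^{(1)}_{m,j}=Ny^{(1)}_{-i,j}$, which the paper dismisses with ``similarly.''
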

\vskip.2in
\begin{proof}
From Theorem \ref{thm2}, it follows that the mapping
$$y_{kl}(u)\mapsto \sum_{i\in
\mathbb{Z}_N}\frac{\omega^{-ki}}{N}s_{i,i+l}(u)$$ is an isomorphism
of the two presentations of $Y(\mathfrak{g}_N)$. And from the
proposition \ref{proposition3.3}, we have that the assignment
$$s_{ij}(u)\mapsto \delta_{ij}+(u\pm\frac{1}{2})^{-1}F_{ij}$$
is an algebra homomorphism between $Y(\mathfrak{g}_N)$ and $U(\mathfrak{g}_N)$.
Then we get an algebra homomorphism between $Y(\mathfrak{g}_N)$ and $U(\mathfrak{g}_N)$:
\begin{align*}
&y_{kl}(u)\mapsto \sum_{i\in \mathbb{Z}_N} \frac{\omega^{-ki}}{N}(\delta_{l,0}+(u\pm\frac{1}{2})^{-1}F_{i,i+l})\\
&~~~~~~~~~=\frac{1}{N}\delta_{l,0}\sum_{i\in\mathbb{Z}_N}\omega^{-ki}+\frac{(u\pm\frac{1}{2})^{-1}}{N}\sum_{i\in \mathbb{Z}_N}\omega^{-ki}F_{i,i+l}\\
&~~~~~~~~~=\frac{1}{N}\delta_{l,0}\delta_{k,0}+\frac{(u\pm\frac{1}{2})^{-1}}{N}\sum_{i\in \mathbb{Z}_N}\omega^{-ki}F_{i,i+l}.
\end{align*}
Subsequently one has
\begin{align*}
&\sum_{i\in \mathbb{Z}_N}\omega^{-ki}F_{i,i+l}=\sum_{i\in\mathbb{Z}_N}\omega^{-ki}E_{i,i+l}-\sum_{i\in\mathbb{Z}_N}\omega^{-ki}E_{i,i+l}^t\\
&~~~~~~~~~~~~~~~~~~~~~~~~~~~~~~~~~~~~~~~~~~~~~ =A_{-k,l}-A_{-k,l}^t\\
\end{align*}
which is just $B_{-k,l}$. So the assignment
\begin{equation*}
y_{kl}(u)\mapsto\frac{1}{N}\delta_{k,0}\delta_{l,0}+\frac{(u+\frac{1}{2})^{-1}}{N}B_{-k,l}
\end{equation*}
does define an algebra homomorphism between $Y(\mathfrak{g}_N)$ and $U(\mathfrak{g}_N)$.

\vskip.1in
Similarly, we can show that the assignment $B_{ij}\mapsto Ny^{(1)}_{-i,j}$  defines an embedding $U(\mathfrak{g}_N)\hookrightarrow Y(\mathfrak{g}_N)$.

\end{proof}

\medskip

\centerline{\bf Acknowledgments}
NJ gratefully acknowledges the partial support of Max-Planck Institut f\"ur Mathematik in Bonn, Simons Foundation grant 198129, and NSFC grant 10728102 during this work.

\bibliographystyle{amsalpha}

\end{document}